\newcommand{\mybox}{%
    \collectbox{%
        \setlength{\fboxsep}{2pt}%
        \fbox{\BOXCONTENT}%
    }%
}
\DeclareMathOperator{\AGL}{AGL}
\DeclareMathOperator{\GL}{GL}
\DeclareMathOperator{\Sym}{Sym}
\DeclareMathOperator{\F}{\mathbb{F}}
\newcommand\deq{\mathrel{\stackrel{\makebox[0pt]{\mbox{\normalfont\tiny def}}}{=}}}
\newcommand{\Size}[1]{\left\lvert #1 \right\rvert}
\newcommand{\Span}[1]{\left\langle#1\right\rangle}
\let\phi\varphi
\theoremstyle{plain}
\newtheorem{theorem}{Theorem}[section]
\newtheorem{lemma}[theorem]{Lemma}
\newtheorem{corollary}[theorem]{Corollary}
\theoremstyle{remark}
\newtheorem{remark}{Remark}
\theoremstyle{definition}
\newtheorem{definition}[theorem]{Definition}
\newtheorem*{notation*}{Notation}
\lstdefinelanguage{GAP}{%
 morekeywords={%
 Assert,Info,IsBound,QUIT,%
 TryNextMethod,Unbind,and,break,%
 continue,do,elif,%
 else,end,false,fi,for,%
 function,if,in,local,%
 mod,not,od,or,%
 quit,rec,repeat,return,%
 then,true,until,while%
 },%
 sensitive,%
 morecomment=[l]\#,%
 morestring=[b]",%
 morestring=[b]',%
}[keywords,comments,strings]
\DeclareMathAlphabet{\mbb}{U}{bbold}{m}{n}
\let \varphi \phi
\newcommand{\rb}{\overline{\rho}}
\newcommand{\M}{M}
\begin{document}
\title[On the primitivity of the AES-128 key-schedule]{On the primitivity of the AES-128 key-schedule}

 \author[R.~Aragona]{Riccardo Aragona}
 \address[R.~Aragona]{DISIM \\
 Universit\`a degli Studi dell'Aquila\\
 via Vetoio\\
 67100 Coppito (AQ)\\
 Italy}       
\email{riccardo.aragona@univaq.it}

\author[R.~Civino]{Roberto Civino}
 \address[R.~Civino]{DISIM \\
 Universit\`a degli Studi dell'Aquila\\
 via Vetoio\\
 67100 Coppito (AQ)\\
 Italy}      
 \email{roberto.civino@univaq.it} 
 
\author[F.~Dalla Volta]{Francesca Dalla Volta}
 \address[F.~Dalla Volta]{Dipartimento di Matematica e Applicazioni \\
 Universit\`a degli studi di Milano - Bi\-coc\-ca\\
Piazza dell'Ateneo Nuovo, 1 \\
20126 Milano (MI) \\
 Italy}      
\email{francesca.dallavolta@unimib.it}

\date{} \thanks{All the authors are members of INdAM-GNSAGA
 (Italy). This work was partially supported by the Centre of EXcellence on Connected, Geo-Localized and 
 Cybersecure Vehicles (EX-Emerge), funded by Italian Government under CIPE resolution n. 70/2017 (Aug. 7, 2017).}

\subjclass[2010]{20B15, 20B35, 94A60} 
\keywords{Primitive groups; Cryptography; Group generated by the round functions; AES; Key schedule; Invariant partitions;}

\begin{abstract}
The key-scheduling algorithm in the AES is the component 
responsible for selecting from the master key the sequence of round keys 
to be xor-ed to the partially encrypted state at each 
iteration. We consider here the group $\Gamma$ generated by the 
action of the AES-128 key-scheduling operation, and we prove 
that the smallest group containing $\Gamma$ and all the translations of the message space is primitive. As a 
consequence, we obtain that no linear partition of the message space can be invariant under its action. 
\end{abstract}

\maketitle


\section{Introduction}
The encryption functions of the AES are the composition of a sequence of round transformations made by a confusion and a diffusion layer followed by a key addition with the so-called round key, derived by
the user master key by means of the public key-scheduling algorithm. 
While the confusion-diffusion step has been designed to provide long-term resistance against known and possibly future attack, the key-schedule has 
been chosen also without neglecting the application of the cipher in resource-constrained devices. The necessary efforts  to keep  the encryption lighter~\cite{aes}
made \emph{de facto} the confusion-diffusion step almost completely in charge of the security. 
Although some recent improvements in the AES cryptanalysis are based on structural properties of the SPN design~(e.g.\ \cite{ronjom2017yoyo,bardeh2019exchange,dunkelman2020retracing}), 
unsurprisingly, also the key-schedule has been targeted in various attacks in recent years~\cite{biryukov2009related,mala2010improved,boura2018making}.
In general, key-scheduling algorithms seem the component on which there is the least consensus on general 
design criteria  and arguably the components for which attacks are less standardised. 

Despite two decades of cryptanalysis, only recently Leurent and 
Pernot have shown the existence of an invariant subspace for four rounds of the AES-128 key-schedule~\cite{leurentnew}.
Such a finding allowed the authors to provide an alternative representation of the key-schedule as four independent
actions on each of the 4-byte-word components of the round key. 
Although related only to the key-schedule, the result is then used  to obtain global improvements in already known differential attacks, showing how the subspace analysis of the key-schedule may highlight some subspace structures that interact with similar structures in the main round function inducing security flaws. 
 
Initially, the more general idea of finding subspaces which are invariant under the encryption functions, for some or possibly all the keys,
 has been notably exploited by Leander et al.\ to cryptanalyse PRINTcipher~\cite{leander2011cryptanalysis}.
 The above-mentioned strategy make use of the fact that an entire subspace
of the message space (or of the key space) is not moved by the encryption functions. Subspace trail cryptanalysis~\cite{grassi2016subspace}, a generalization of invariant
subspace cryptanalysis, has been also used to attack reduced-round AES~\cite{grassi2017new}.

The imprimitivity attack, introduced 
by Paterson against an intentionally flawed but apparently secure DES-like block cipher~\cite{paterson1999imprimitive},  is conceptually similar to invariant subspaces attacks, 
except it exploits the existence of a full \emph{partition} of the message space that is preserved by the encryption.
 In particular, in this attack scenario, the cryptanalyst usually takes advantage of an entire \emph{linear partition} of the message space, i.e.\ a partition made by the cosets of a proper and non-trivial subspace, which is invariant.
While it is hard in general to prove the non-existence of invariant subspaces (see~\cite{beierle2017proving} for an analysis of the security impact provided by the choice of the round constants), the non-existence of invariant linear partitions after one round can be more easily established using group-theoretical arguments, i.e.\ proving that a given group containing the encryption functions acts primitively on the message space. 

Non-existence results for invariant partitions in standardized constructions have been proved in the last years~\cite{wendes,sparwenrij,wenkas,aragona2017group}, and more general results determining conditions which imply the non-existence of invariant linear partitions obtained by primitivity arguments can be found in the 
literature~\cite{carantiprimitive,aragona2018primitivity}.\\

 In this work we prove a \emph{primitivity} result on the AES-128 key-schedule (see Theorem~\ref{thm:main} and Corollary~\ref{main:coro}), i.e.\ we show that no linear partition can be invariant after one round, when each possible
vector is considered as round counter.
The strategy used here is the following: the action of the key-schedule is
modeled by means of a formal operator defining a group which is proved to be primitive using Goursat's Lemma (cf. Theorem~\ref{gours}). In particular, we 
prove that the group generated by the action of the AES-128 key-schedule is primitive provided that a suitable considerably smaller group
generated by some AES-128 components is primitive, a result which can be established using known facts~\cite{carantiprimitive,aragona2018primitivity}.
As a consequence, our result can be generalized to each substitution-permutation network whose round components
are suitable for generating a primitive group~\cite{carantiprimitive} and whose 4-branch AES-like key-schedule is built accordingly.
To our knowledge, with respect to invariant partitions, the study carried out in this paper is the first example of group-theoretical investigation of the sole key-schedule, which is 
in general excluded from primitivity arguments, except for some recent partial results~\cite{arafeistel,calderinikey}.

\subsubsection*{Related works}
In this paper we use the strategy of a \emph{primitivity reduction} via Goursat's Lemma. We show indeed that the primitivity of a complex structure, such as the 4-branch key-schedule transformations of AES, is inherited from the primitivity of the group generated by simpler SPN-like functions, i.e.\ those acting on the last group of bytes. Similar arguments have been used to prove that the primitivity of more complex structures (e.g.\ Feistel networks, Lai-Massey constructions) reduces to the primitivity of their inner SPN-like components~\cite{aragonawave,aragona2020primitivity}.

\subsubsection*{Organization of the paper}
In Section~\ref{sec:pre} we introduce the notation and the preliminary results, and  present an algebraic representation 
 of the AES-128 key-schedule and the related permutation group. In Section~\ref{sec:pri} we present our primitivity reduction in Theorem~\ref{thm:main} and show its application to AES  in Corollary~\ref{main:coro}. The technical proof of Theorem~\ref{thm:main} with the use of Goursat's Lemma is shown in Sec.~\ref{sec:proof}.
 Finally, in Section~\ref{sec:concl} we draw our conclusions.


\section{Preliminaries and model}\label{sec:pre}
In this section we introduce some notation and preliminary results, starting by briefly recalling the definition of the
AES-128 key-schedule. The reader is invited to refer to Daemen and Rijmen for a detailed description including comments on 
design choices~\cite{aes}.\\

The AES-128 key-schedule is an invertible function of $\Sym(\F_2^{128})$ which, using the cipher's components, transforms the previous
round key into the next one, starting from the master key, proceeding as shown in Fig.~\ref{fig:AESks}, where 
\begin{itemize}
\item[-] $\lambda: \F_2^{32} \rightarrow \F_2^{32}$ denotes the linear operation \texttt{RotWord},
\item[-] $\gamma : \F_2^{8} \rightarrow \F_2^{8}$ represents the AES S-Box \texttt{SubBytes},
\item[-] $rc_i \in \F_2^8$ is a round constant different in each round.
\end{itemize}
In particular, round-key bits are gathered into four groups, each made by 4 bytes. The bytes of the last group are first shifted left by one position and then 
transformed by the cipher's S-Box. Finally, a round-dependent counter is xor-ed to the first byte of the last block. The output of this transformation is then xor-ed to
the remaining three blocks of bytes as shown in Fig.~\ref{fig:AESks}. 

\begin{figure}
\centering
\includegraphics[scale= 0.47]{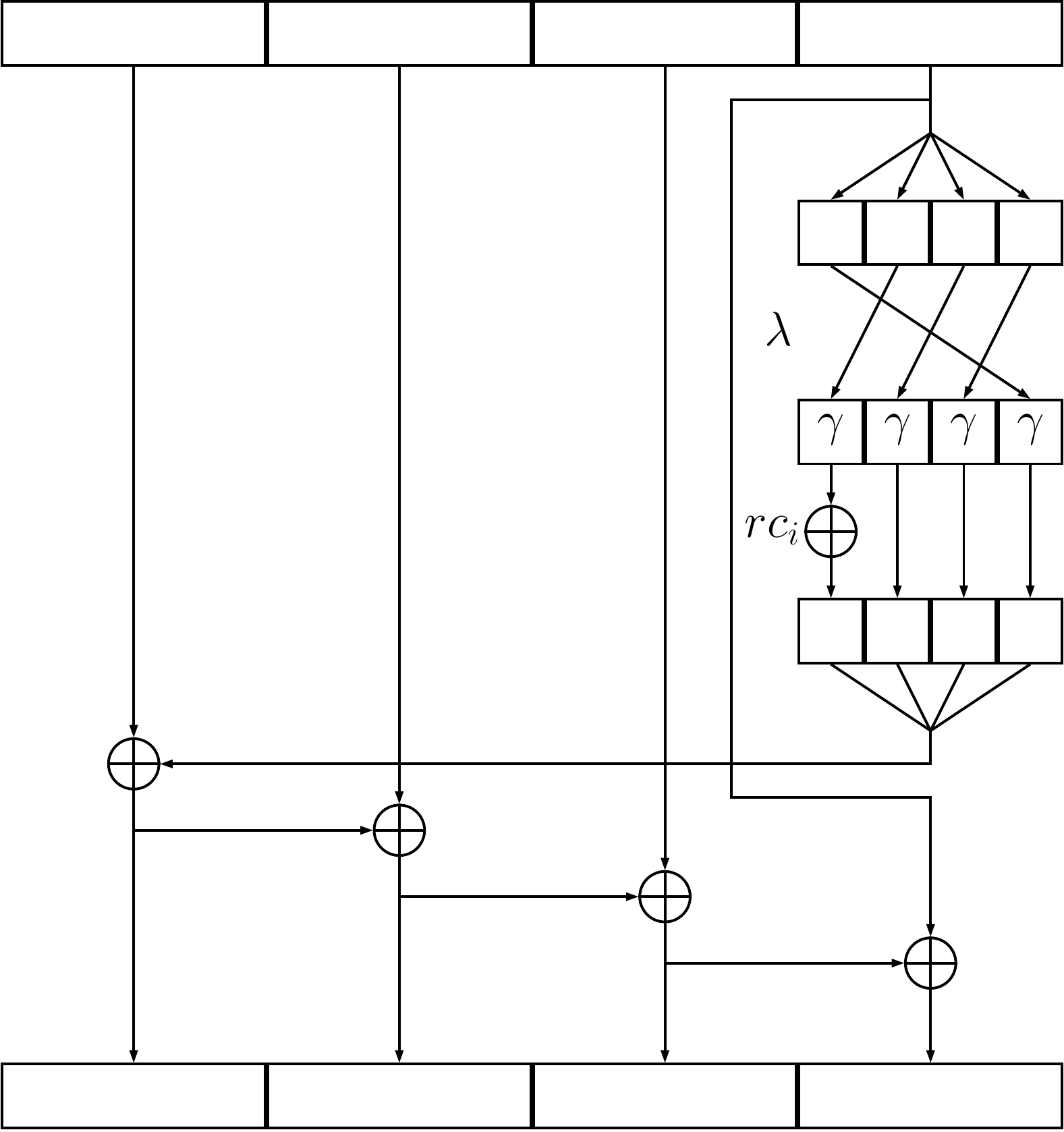}
\caption{The $i$-th transformation of the AES-128 key-schedule}\label{fig:AESks}
\end{figure}

\subsection*{Notation}
In the following, $n$ is a non-negative integer and  $V \deq \F_2^{n}$ is the $n$-dimensional vector space over $\F_2$. If $H$ is a subspace of $V$ we write $H \leq V$, and the same notation is used to denote subgroups. We denote by $\mbb 0 : V \rightarrow V$ the null function on $V$. Moreover, $\Sym(V)$ denotes the symmetric group acting on $V$ and $\mbb1$ its identity. If $f \in \Sym(V)$ and $x \in V$ we write $xf$ to denote the functional evaluation $f(x)$. The group of the translations on V, i.e.\ the group of the maps $\sigma_v: V \rightarrow V$, such that $x\mapsto x+v$, is denoted by $T_{n}$. We also denote by $\AGL(V)$ the group of all affine permutations of $V$ and by $\GL(V)$ the group of the linear ones.
In particular, in the case under investigation,    we mean by $n$ the size of each group of $4$ bytes, i.e.\ $n=32$ bits. In this assumption, the key-scheduling transformation will be acting on $V^4$ as an element of the symmetric group $\Sym(V^4)$, whose corresponding 
 group of translations is denoted by $T_{4n}$, where the translation $\sigma_{(v_1,v_2,v_3,v_4)}$ acts on $(x_1,x_2,x_3,x_4) \in V^4$ as 
\[
(x_1,x_2,x_3,x_4)\mapsto (v_1+x_1,v_2+x_2,v_3+x_3,v_4+x_4).
\]
It is worth noting here that the addition with the round counter in the AES-128 key-schedule acts exactly as a particular translation of $T_{4n}$.

For sake of clarity, we will use different notations for elements of $V$, $V^2$ and $V^4$. In particular, we will denote an element of $V^4$ by superscripting an arrow on the symbol, i.e.\ $\vec v \in V^4$, an element of $V^2$ using symbols in bold, i.e.\ $\vec v=(\bm{v_1},\bm{v_2})$, in such a way \[\vec v=(\bm{v_1},\bm{v_2})=(v_1,v_2,v_3,v_4)\in V^4,\] where $\bm{v_i}\in V^2$ and $v_j\in V$ for $1 \leq i \leq 2$ and $1 \leq j\leq 4$. \\

Let us now introduce the elements of group theory used throughout this article.
\subsection*{Groups}
Let $G$ be a group acting on a set $\M$. For each $g \in G$ and $v \in \M$ we denote the action of $g$ on $v$ as $vg$. 
The group $G$ is said to be \emph{transitive} on $\M$ if for each $v,w \in \M$ there exists $g \in G$ such that $vg=w$.
A partition $\mathcal{B}$ of $\M$ is \emph{trivial} if $\mathcal{B}=\{\M\}$ or $\mathcal{B}=\{\{v\} \mid v \in \M\}$, and \emph{$G$-invariant} if for any $B \in \mathcal{B}$ and $g \in G$ it holds $Bg \in \mathcal{B}$. Any non-trivial and $G$-invariant partition $\mathcal{B}$ of $\M$ is called a \emph{block system} for $G$. In particular any $B \in \mathcal{B}$ is called an \emph{imprimitivity block}. The group $G$ is \emph{primitive} in its action on $\M$ (or $G$ \emph{acts primitively} on $\M$) if {$G$ is transitive and} there exists no block system. 
Otherwise,  the group $G$ is \emph{imprimitive} in its action on $\M$ (or $G$ \emph{acts imprimitively} on $\M$). We recall here some well-known results that will be useful in the remainder of this paper~\cite{cameronpermutation}. 

\begin{lemma}\label{lemma:trans}
  If $T \leq G$ is transitive, then a block system
  for $G$ is also a block system for $T$.
\end{lemma}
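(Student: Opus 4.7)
The plan is to unpack the definition of ``block system for $T$'' and verify it directly for the given partition $\mathcal{B}$. Under the paper's definition, being a block system for $T$ means exactly two things: $\mathcal{B}$ is non-trivial as a partition of $\M$, and $\mathcal{B}$ is $T$-invariant. Both conditions should follow essentially for free from the hypothesis that $\mathcal{B}$ is a block system for $G$, together with the inclusion $T\leq G$.

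First I would note that non-triviality of $\mathcal{B}$ as a partition of $\M$ is a property of $\mathcal{B}$ itself (it says neither $\mathcal{B}=\{\M\}$ nor $\mathcal{B}=\{\{v\}\mid v\in\M\}$), not of the group acting; so this is immediate from the hypothesis that $\mathcal{B}$ is a block system for $G$. Next I would check $T$-invariance: fix any $B\in\mathcal{B}$ and any $t\in T$. Since $T\leq G$ we have $t\in G$, and $G$-invariance of $\mathcal{B}$ yields $Bt\in\mathcal{B}$. This holds for every choice of $B$ and $t$, so $\mathcal{B}$ is $T$-invariant. Combining the two observations, $\mathcal{B}$ is a non-trivial $T$-invariant partition of $\M$, that is, a block system for $T$.

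There is essentially no obstacle in this argument: the statement is a bookkeeping lemma whose proof is two lines once the definitions are expanded. The transitivity assumption on $T$ does not enter the verification of $T$-invariance or non-triviality under the paper's minimal definition of block system; it is recorded in the hypothesis because block systems are only a meaningful notion (in the sense used throughout the rest of the paper, e.g.\ for discussing primitivity) for transitive groups, and because the subsequent applications of the lemma will always invoke it in that setting. The lemma will be used later to transfer imprimitivity obstructions from a large group $G$ to a transitive subgroup $T$ of interest, so the content is conceptual rather than technical.
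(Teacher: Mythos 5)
Your proof is correct: the paper states this lemma without proof, citing it as a well-known fact from the permutation-groups literature, and your direct unpacking of the definitions (non-triviality is intrinsic to the partition, and $T$-invariance follows from $G$-invariance since $T\leq G$) is exactly the standard argument. Your observation that the transitivity of $T$ plays no role in the verification under the paper's definition of block system, but is recorded because the notion is only used for transitive groups (as in Lemma~\ref{translatioBlocks}), is also accurate.
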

In the case under consideration in this paper, the block system will be a \emph{linear partition}:

\begin{lemma}\label{translatioBlocks}
Let $\M$ be a finite vector space over $\mathbb F_2$ and $T$ its translation group. 
Then $T$ is transitive and imprimitive on $\M$. A block system $\mathcal U$ for $T$ is composed by
  the cosets  of a non-trivial and proper subgroup $ U < (M,+)$, i.e.\ 
  \begin{equation*}
 \mathcal U =    \{ 
     U + v
      \mid
 	v \in    M
      \}.
        \end{equation*}
  \end{lemma}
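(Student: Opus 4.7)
The plan is to prove the three assertions in sequence: transitivity of $T$, existence of a non-trivial $T$-invariant partition (hence imprimitivity), and the characterization that every block system of $T$ arises as the coset partition of a proper non-trivial subgroup.

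For transitivity, given any $u,v\in M$, the translation $\sigma_{u+v}\in T$ sends $u$ to $u+(u+v)=v$ (using that we are in characteristic $2$), so $T$ acts transitively on $M$. For imprimitivity, I would exhibit an explicit block system: pick any non-trivial proper subgroup $U<(M,+)$ (which exists since $M$ is a non-trivial $\F_2$-vector space), and form the partition $\mathcal U=\{U+v\mid v\in M\}$. For any $w\in M$, $(U+v)\sigma_w=U+v+w\in\mathcal U$, so $\mathcal U$ is $T$-invariant, and being non-trivial it is a block system.

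The substantive direction is the converse. Let $\mathcal B$ be any block system for $T$, and let $B\in\mathcal B$ be the unique block containing $0$. I would show that $B$ is a subgroup of $(M,+)$. For any $u\in B$, the translation $\sigma_u$ sends $0\in B$ to $u\in B$, hence $B\sigma_u\cap B\neq\emptyset$; by the defining property of a block system this forces $B\sigma_u=B$. Therefore for every $v\in B$ we have $u+v=v\sigma_u\in B$, so $B$ is closed under addition, and since $-v=v$ in characteristic $2$, $B$ is a subgroup. Non-triviality of the partition $\mathcal B$ translates into $\{0\}\subsetneq B\subsetneq M$, i.e.\ $B$ is a proper non-trivial subgroup. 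Finally, since $T$ acts transitively on $\mathcal B$ and each translation $\sigma_v$ sends $B$ to $B+v$, the other elements of $\mathcal B$ must be exactly the cosets $B+v$, giving $\mathcal B=\{B+v\mid v\in M\}$ with $U:=B$.

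The only mildly delicate point is the argument that the block $B$ through $0$ is closed under addition; this is essentially a one-line application of the basic fact that two blocks in a block system either coincide or are disjoint, combined with the observation that translations move the basepoint in a controlled way. Everything else is routine, and no deeper group theory than Lemma~\ref{lemma:trans} and the definition of a block system is needed.
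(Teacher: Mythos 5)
Your proof is correct and is the standard textbook argument; the paper itself states this lemma without proof, merely citing it as a well-known fact from Cameron's book, so your write-up fills in exactly the expected details (transitivity via $\sigma_{u+v}$, the block through $0$ being a subgroup because intersecting blocks coincide, and transitivity forcing the remaining blocks to be the cosets). The one small caveat is in your existence step: a non-trivial proper subgroup of $(\M,+)$ exists only when $\dim_{\mathbb F_2}\M\geq 2$ (for $\M=\mathbb F_2$ the translation group is in fact primitive), a degenerate case that the lemma's statement itself glosses over and which is irrelevant in the paper's setting.
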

  \subsection*{The key-schedule representation}
Let us now introduce the representation of the AES-128 key-schedule that allows us to provide an easy description of the subgroup of $\Sym(V^4)$ which is
  the subject of this work. Let us start by defining the transformation acting on the last group of four bytes, as in Fig~\ref{fig:AESks}.

\begin{definition}\label{def:opaes}
Let $\rho_{\mathrm{AES}} \in \Sym(V)$ be the composition of $\lambda$ and the parallel application of $4$ copies of $\gamma$, i.e.\ 
\[
\rho_{\mathrm{AES}}  \deq \lambda\gamma' \in \Sym(V),
\]
where $\gamma': \F_2^{32} \mapsto \F_2^{32}$, $(v_1,v_2,v_3,v_4) \mapsto (v_1\gamma,v_2\gamma,v_3\gamma,v_4\gamma)$, with $v_i \in \F_2^8$.
\end{definition}
The function previously defined, up to the xor with the round counter in the first byte, represents the transformation acting on the last group of bytes in the
AES-128 key-schedule. The following definition is instead a more general description of the full transformation.
\begin{definition}\label{def:op}
Given $\rho \in \Sym(V)$, let us define the \emph{AES-like key-schedule operator induced by $\rho$} as the formal matrix
\[
\rb \deq \begin{pmatrix}
\mbb1 & \mbb1 & \mbb1 & \mbb1\\
\mbb0 & \mbb1& \mbb1 & \mbb1\\
\mbb0 & \mbb0& \mbb1 & \mbb1\\
\rho & \rho & \rho & \mbb1+\rho
\end{pmatrix},
\] 
acting on $V^4$ as 
\[(v_1,v_2,v_3,v_4)\mapsto (v_1+v_4\rho,v_1+v_2+v_4\rho,v_1+v_2+v_3+v_4\rho, v_1+v_2+v_3+v_4+v_4\rho),\]
as also displayed in Fig.~\ref{fig.rounds}.
The operator $\rb$ has the following inverse acting as 
\[
(v_1,v_2,v_3,v_4)\rb^{-1}=(v_1 + (v_3+v_4)\rho,v_1+v_2,v_2+v_3,v_3+v_4).
\]
\end{definition}
\begin{figure}
\centering
\includegraphics[scale= 0.55]{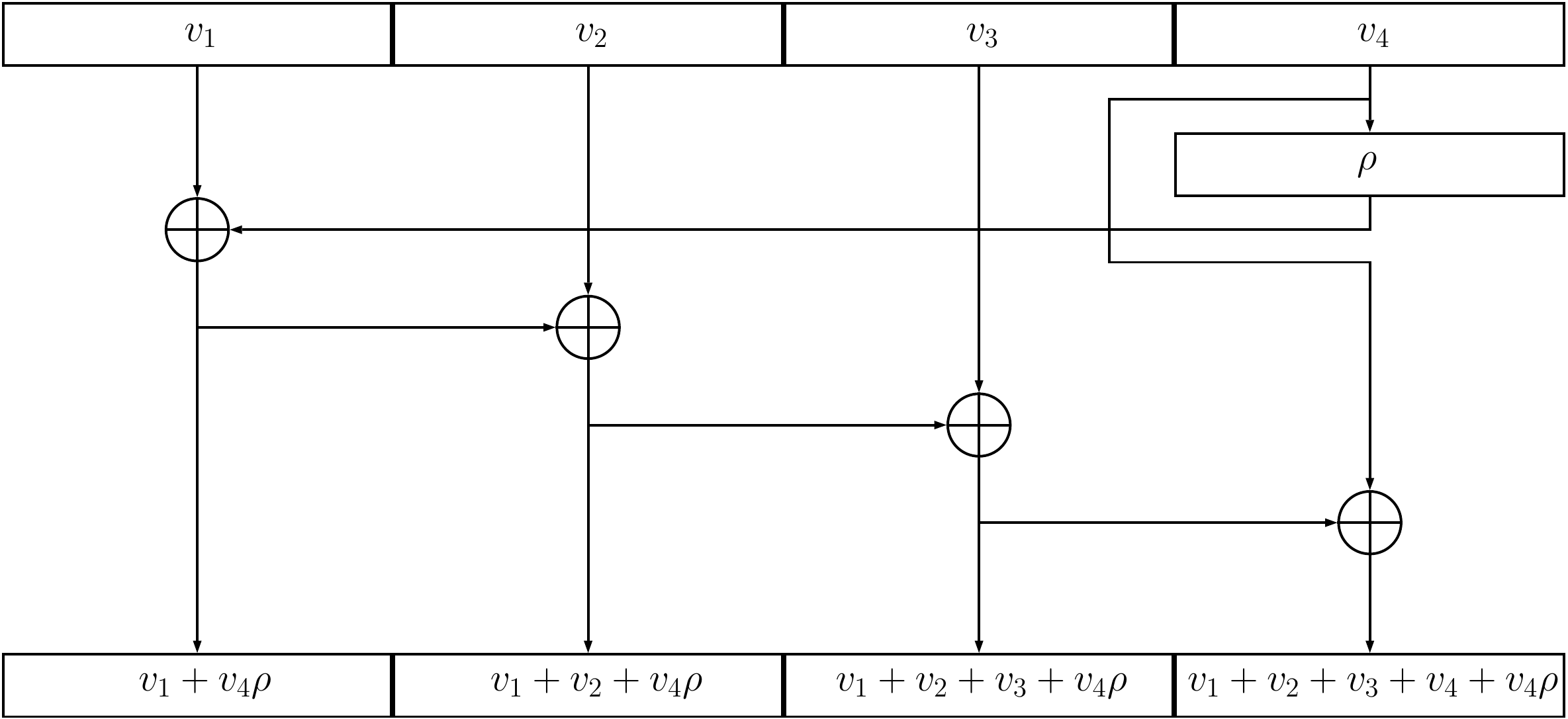}
\caption{The key-schedule operator induced by $\rho$}\label{fig.rounds}
\end{figure}

It is not hard to notice, when considering $\rho_{\mathrm{AES}}$, that the map \[\overline{\rho_{\mathrm{AES}}}\sigma_{(\overline{rc_i},\overline{rc_i},\overline{rc_i},\overline{rc_i})}\]
correspond to the $i$-th round-key transformation in the AES-128 key-schedule, where $\overline{rc_i} = (rc_i, 0,0,0) \in \F_2^{32}$. Keeping in mind that our focus is to study group-theoretical 
properties of the subgroup $\Gamma <\Sym(V^4)$ generated by the elements of the type of $\overline{\rho_{\mathrm{AES}}}\sigma_{(\overline{rc_i},\overline{rc_i},\overline{rc_i},\overline{rc_i})}$, 
for each admissible value of $rc_i \in \F_2^8$, and establish its primitivity  by using Lemma~\ref{translatioBlocks}, it is important to notice that $\Gamma$ does not contain
the whole translation group $T_{128}$. For this reason, $\Gamma$ needs to be extended by assuming a more general action of the round counter. 
\begin{definition}\label{def:gr}
Let us define the group
\[
\Gamma_{\mathrm{AES}} \deq \Span{\overline{\rho_{\mathrm{AES}}}\sigma_{(x,y,z,t)} \mid (x,y,z,t) \in V^4}.
\]
It is easily noticed that 
\begin{itemize}
\item[-] $\Gamma_{\mathrm{AES}}$, which contains $\Gamma$, is the smallest subgroup of the symmetric group containing both $T_{128}$ and the transformation of the AES-128 key-schedule, when the correct round counter is chosen;
\item[-] $\Gamma_{\mathrm{AES}} = \Span{\overline{\rho_{\mathrm{AES}}}, T_{128}}$.
\end{itemize} 

 \end{definition}
In the remainder we prove that $\Gamma_{\mathrm{AES}}$ is primitive. This guarantees that no non-trivial and proper subgroup
$U < V^4$ can generate a partition, as in Lemma~\ref{translatioBlocks}, which is invariant under the transformations of $\Gamma_{\mathrm{AES}}$.


\section{The primitivity of $\Gamma_\mathrm{AES}$}\label{sec:pri}
In this section we prove our main result, i.e.\ the primitivity of $\Gamma_{\mathrm{AES}}$ (cf.\ Corollary~\ref{main:coro}), as a consequence of
a more general result (cf.\ Theorem~\ref{thm:main}) in which we show that the primitivity of $\Span{\overline{\rho}, T_{4n}}$ \emph{reduces} to the primitivity of $\Span{{\rho}, T_n}$,
when $\overline{\rho}$ is the key-schedule operator induced by $\rho$ (cf.\ Definition~\ref{def:op}) and provided that $\rho$ is bijective and not affine.
The proof of the primitivity reduction is rather technical and makes use of repeated applications of Goursat's Lemma (see Sec.~\ref{sec:proof}) so, for the sake of readability, is shown in a separate section.  

We can anticipate our main contribution which is stated as follows: 
\begin{theorem}[Primitivity reduction]\label{thm:main}
Let $\rho \in \Sym(V) \setminus \AGL(V)$.
If $\Span{\rho, T_n}$ is primitive on $V$, then $\Span{\rb,T_{4n}}$ is primitive on $V^4$.
\end{theorem}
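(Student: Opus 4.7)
The plan is proof by contradiction: assume that $\Span{\rb, T_{4n}}$ acts imprimitively on $V^4$ and deduce the existence of a proper non-trivial $\rho$-invariant subgroup of $V$, contradicting the primitivity of $\Span{\rho, T_n}$. Since $T_{4n}$ is a transitive subgroup of $\Span{\rb, T_{4n}}$, Lemmas~\ref{lemma:trans} and~\ref{translatioBlocks} provide a block system given by the cosets of some proper non-trivial $U \leq V^4$. Composing $\rb$ with the translation by $\vec{0}\rb$, which lies in $T_{4n} \leq \Span{\rb,T_{4n}}$, yields an element of the same group stabilising $U$ and fixing $\vec{0}$; this modified element has exactly the shape of Definition~\ref{def:op} with $\rho$ replaced by a non-affine map, since non-affineness is preserved by composition with a translation. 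Hence one may assume $U\rb = U$ (and $U\rb^{-1} = U$) from the outset.

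The analysis of such a $U$ proceeds by applying Goursat's Lemma to the decomposition $V^4 = V^2 \times V^2$, writing $(\bm{v_1}, \bm{v_2})$ with $\bm{v_1} = (v_1, v_2)$ and $\bm{v_2} = (v_3, v_4)$. Goursat encodes $U$ by a quintuple $(U_1, U_2, W_1, W_2, \phi)$ with $U_2 \leq U_1 \leq V^2$, $W_2 \leq W_1 \leq V^2$, and an isomorphism $\phi : U_1/U_2 \to W_1/W_2$. Plugging the explicit formulas of Definition~\ref{def:op} into the equations $U\rb = U$ and $U\rb^{-1}=U$ translates these into closure conditions on $U_1, U_2, W_1, W_2$ under certain linear combinations of coordinates, and into rigid compatibility relations between $\phi$ and the action of $\rho$ on the fourth coordinate.

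One then iterates Goursat's Lemma on each of $U_1, U_2, W_1, W_2 \leq V \times V$. The finer quintuples, together with the constraints propagated from the first level, should either force $U$ to be trivial or the whole of $V^4$, or produce a proper non-trivial subgroup $H \leq V$ stabilised by $\rho$. Any such $H$ gives, via Lemma~\ref{translatioBlocks}, a block system for $\Span{\rho, T_n}$, which is the desired contradiction.

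The main obstacle is the combinatorial case analysis generated by the two nested Goursat decompositions: at each level one must enumerate subcases in which the various components are trivial, full, or strictly intermediate. In the degenerate branches the invariance equations quickly collapse $U$; the serious branches are those where a linear or affine $\rho$ would be compatible with an $\rb$-invariant $U$. It is precisely the hypothesis $\rho \notin \AGL(V)$ that rules these out: the required compatibility relations force, in the non-affine case, either a direct contradiction or the existence of a proper non-trivial $\rho$-invariant $H \leq V$, contradicting primitivity of $\Span{\rho, T_n}$.
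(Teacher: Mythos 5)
Your outline reproduces the paper's strategy faithfully at the level of headings --- contradiction via Lemmas~\ref{lemma:trans} and~\ref{translatioBlocks}, normalisation so that $U\rb=U$ with $\vec 0$ fixed, a Goursat decomposition of $U\leq V^2\times V^2$ followed by a second application of Goursat to the four resulting subgroups of $V^2$ --- but it stops exactly where the proof begins. The whole content of the theorem is the case analysis that you defer with the phrases ``should either force $U$ to be trivial\ldots'' and ``quickly collapse $U$''; nothing in your text shows that this analysis actually closes, and there is no a priori reason that the nested Goursat data must resolve into the trichotomy you assert. Concretely, three things are missing. First, the identification of \emph{which} subgroup produced by the iterated decomposition is forced to be $\rho$-invariant: in the paper it is first $D''$ (the innermost piece of $D$), extracted by applying $\rb$ and $\rb^{-3}$ to vectors $(0,0,0,d'')\in U$ and adding the images to isolate $(0,0,0,d''\rho)\in U$. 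Second, the treatment of the extremal cases: when $D''=\F_2^n$ one must push the equalities back up through both Goursat levels to conclude $U=V^4$; when $D''=\{0\}$ one must manufacture the \emph{next} candidate ($C''$, and after that $A'$), each step requiring fresh explicit computations with $\rb^{\pm 1}$, $\rb^{-2}$, $\rb^{-3}$ and auxiliary facts such as $A\phi=A$ and $B=\{\bm 0\}$ (Lemma~\ref{cond} and the in-proof sublemmas).

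Third, and most importantly, your account of where $\rho\notin\AGL(V)$ enters is not accurate. Non-affineness does not ``rule out the serious branches'' across the board; it is invoked exactly once, in the single terminal case $A'=\F_2^n$, where the accumulated relations (Eq.~\eqref{eq:phi2-1} in the paper) show that $\rho$ agrees with a linear map on all of $V$. Every other branch is closed either by exhibiting a proper non-trivial $\rho$-invariant subgroup ($D''$, $C''$ or $A'$), contradicting primitivity of $\Span{\rho,T_n}$, or by forcing $U$ to be trivial or improper. (A further gap, acknowledged even by the paper in its closing remark, is that $\rho$-invariance of a subgroup is formally weaker than that subgroup generating an invariant partition for $\Span{\rho,T_n}$; one must either upgrade the computations to the coset level or note that the same manipulations apply verbatim to cosets.) As written, your proposal is a plan for a proof rather than a proof: the decomposition is the right one, but all of the work that makes the statement true is asserted rather than carried out.
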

\begin{proof}
See Sec.~\ref{sec:proof}.
\end{proof}

According to the previous fact, the primitivity of the AES-128 key-schedule can be deduced by the primitivity of the group generated by $\rho_{\mathrm{AES}}$ and $T_{32}$, 
i.e.\ by the composition of the linear transformation \texttt{RotWord} and the parallel application of 4 copies of the S-Box \texttt{SubBytes} and by the translations on the space of $4$-byte words. As already mentioned, the primitivity of the latter can be obtained, as shown below, from already established results. \\

Let us prove that $\rho = \rho_{\mathrm{AES}}$ satisfies the hypothesis of Theorem~\ref{thm:main}, i.e.\ that $\Span{\rho_{\mathrm{AES}}, T_{32}}$ generates a primitive group. To do so, we need the following definitions
and a general result of primitivity for substitution-permutation networks~\cite{aragona2018primitivity}.

Let us write $n = s \cdot b$, for some $s,b > 1$, and let us decompose $V$ as a direct sum of subspaces accordingly, i.e.\ $V = \bigoplus_{i=1}^b V_i$, where $\dim(V_i) = s$. Each $V_i$, spanned by the canonical vectors $e_{s(i-1)+1}, \ldots, e_{s(i-1)+s}$, is called a \emph{brick}. Recall that, in the case of $\rho_{\mathrm{AES}}$, we have $n=32$, $s = 8$ and $b=4$. 

Given $f: \F_2^s \rightarrow \F_2^s$, for each $a \in \F_2^s$, $a \ne 0$, we denote by
\[\partial_a(f): \F_2^s \rightarrow \F_2^s, \quad x \mapsto xf+(x+a)f\]
the \emph{derivative of $f$ in the direction $a$}.
Recall that when $f \in \Sym(\F_2^s)$ is \emph{$\delta$-differentially uniform}, for some $2 \leq \delta \leq 2^s$, then $\Size{\mathrm{Im}(\partial_a(f))}\geq 2^{s}/\delta$ for each $a \ne 0$.
Moreover, if $0 f = 0$, we say that $f$ is \emph{$\delta$-anti-invariant} if
for any two subspaces $W_1, W_2 \leq  \F_2^s$ such that $W_1 f = W_2$, then either $\dim(W_1) = \dim(W_2) < s - \delta$ or
$W_1 = W_2 = \F_2^s$.

The next theorem is stated using the notation introduced above in this section.
\begin{theorem}[\cite{aragona2018primitivity}]\label{thm:present}
Let $f \in \Sym(\F_2^s)$ such that 0f=0, let $F \in \Sym(V)$ be the function acting as $f$ on each $s$-dimensional brick $V_i$ of $V$ and let $\Lambda \in \GL(V)$. If no non-trivial and proper direct sum of bricks of $V$ is invariant under $\Lambda$ and for some $2 \leq \delta \leq s-1$ the function $f$ is
\begin{itemize}
\item[-] $2^\delta$-differentially uniform,
\item[-] $(\delta-1)$-anti-invariant,
\end{itemize}
then $\Span{F\Lambda, T_n}$
is primitive.
\end{theorem}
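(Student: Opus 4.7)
The plan is to argue by contradiction. Assume that $G \deq \Span{F\Lambda, T_n}$ acts imprimitively on $V$. Since $T_n \leq G$ is transitive, Lemmas~\ref{lemma:trans} and~\ref{translatioBlocks} imply that any block system of $G$ consists of the cosets of a proper non-trivial subspace $U \leq V$. Because $0f = 0$ and $\Lambda$ is linear, $0\, F\Lambda = 0$, so the block containing $0$ must be fixed: $U F\Lambda = U$, equivalently $UF = U\Lambda^{-1}$. Since the right-hand side is a subspace, $UF$ is forced to be a subspace of $V$, which is a strong rigidity condition on the action of the brick-wise map $F$.

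The strategy is then to show that $U$ must be a direct sum of whole bricks: combined with $UF\Lambda = U$ and the fact that $F$ restricts to a bijection of each brick fixing $0$ (so $UF = U$ whenever $U$ is a sum of bricks), this yields $U\Lambda = U$, contradicting the hypothesis on $\Lambda$. I would establish the direct-sum-of-bricks claim in two steps. First, since cosets of $U$ are mapped by $F$ onto cosets of $U\Lambda^{-1}$, every $u \in U$ satisfies $\mathrm{Im}(\partial_u F) \subseteq U\Lambda^{-1}$; the brick-wise nature of $F$ makes this image the Cartesian product of the $\mathrm{Im}(\partial_{u_i} f)$, each of size at least $2^{s-\delta}$ whenever $u_i \neq 0$ by $2^\delta$-differential uniformity. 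This yields a lower bound on $|U|$ controlled by the number of nonzero brick-components of $u$. Second, considering the projections $\pi_i(U),\pi_i(U\Lambda^{-1}) \leq V_i$ together with the slices $U \cap V_i$ and $U\Lambda^{-1} \cap V_i$, a symmetry argument using $F^{-1}$ (itself brick-wise and fixing $0$) shows that $f$ sends $\pi_i(U)$ exactly onto $\pi_i(U\Lambda^{-1})$, and likewise $U \cap V_i$ exactly onto $U\Lambda^{-1} \cap V_i$. Hence $f$ maps subspaces of $V_i$ onto subspaces of $V_i$, so $(\delta-1)$-anti-invariance forces each of them to be either zero, the whole brick $V_i$, or of dimension at most $s - \delta$.

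Reconciling the differential-uniformity lower bound with the anti-invariance dichotomy, one rules out the small nonzero dimensions and deduces that $\pi_i(U) \in \{0,V_i\}$ and $U \cap V_i \in \{0,V_i\}$ for every $i$; arguing on pairs of bricks and exploiting $|U| \geq 2^{k(s-\delta)}$ whenever $u \in U$ has brick-weight $k$ then forces $U = \bigoplus_{i \in I} V_i$ for some $I \subseteq \{1,\dots,b\}$. The main obstacle will be precisely this reconciliation step: the parameters $2^\delta$ (differential uniformity) and $\delta-1$ (anti-invariance) are calibrated so that the two bounds just overlap, and making the argument work requires a careful case analysis balancing the admissible brick-weights of elements of $U$ against the forbidden intermediate subspace dimensions inside each $V_i$. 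Once $U$ is shown to be a direct sum of bricks, the contradiction with the hypothesis on $\Lambda$ is immediate, completing the proof.
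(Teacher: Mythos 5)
First, a point of reference: the paper does not prove this statement at all --- Theorem~\ref{thm:present} is imported verbatim from~\cite{aragona2018primitivity} and used as a black box in the proof of Theorem~\ref{thm:main2}. So your attempt can only be measured against the argument in that cited source, whose strategy you have in fact correctly reconstructed: reduce via Lemmas~\ref{lemma:trans} and~\ref{translatioBlocks} to a linear block $U$ with $UF\Lambda = U$, hence $UF = U\Lambda^{-1}$; observe that $\mathrm{Im}(\partial_u(F))\subseteq U\Lambda^{-1}$ for $u\in U$ and that this image factors as a Cartesian product over bricks; note that $F$ maps $U\cap V_i$ onto $U\Lambda^{-1}\cap V_i$ and $\pi_i(U)$ onto $\pi_i(U\Lambda^{-1})$, so that anti-invariance applies; conclude that $U$ is a direct sum of bricks and contradict the hypothesis on $\Lambda$ (the final step, $UF=U$ for a sum of bricks hence $U\Lambda=U$, is correct). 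All of these intermediate claims check out.

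The genuine gap is the step you yourself flag and then do not carry out. As you have set it up, the two estimates do \emph{not} close: $2^\delta$-differential uniformity gives $\dim \ge s-\delta$ for the relevant subspaces of $V_i$, while $(\delta-1)$-anti-invariance gives ``either $=V_i$ or $\dim \le s-\delta$'', so the value $s-\delta$ survives both bounds and no amount of case analysis on brick-weights alone will eliminate it. An extra idea is needed, and the standard one is this: since $f$ is a permutation, $0\notin\mathrm{Im}(\partial_a(f))$ for $a\neq 0$; a subset of $\F_2^s$ of size at least $2^{s-\delta}$ that avoids $0$ cannot be contained in a subspace of dimension $s-\delta$ (it would have to equal that subspace and hence contain $0$), so the subspace of $\pi_i(U\Lambda^{-1})$ generated by $\mathrm{Im}(\partial_{u_i}(f))$ has dimension at least $s-\delta+1$, which anti-invariance then promotes to $\pi_i(U\Lambda^{-1})=V_i$. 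A separate (and also nontrivial) argument is still required to upgrade the statement about projections $\pi_i(U)$ to the statement about slices $U\cap V_i$, i.e.\ to exclude ``diagonal'' subspaces and the residual case $\dim(U\cap V_i)=s-\delta$; your sketch conflates these two and gives no mechanism for the second. In short: right skeleton, correct auxiliary claims, but the pivotal tie-breaking argument --- the actual content of the theorem --- is missing.
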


We are now ready to prove the primitivity of $\Span{\rho_{\mathrm{AES}}, T_{32}}$ as a consequence of Theorem~\ref{thm:present}.
\begin{theorem}\label{thm:main2}
The group $\Span{\rho_{\mathrm{AES}}, T_{32}} < \Sym(\F_2^{32})$ is primitive.
\end{theorem}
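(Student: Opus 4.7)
The plan is to derive this as a direct application of Theorem~\ref{thm:present}, taking $n=32$, $s=8$, $b=4$, with $F$ the parallel S-Box layer and $\Lambda = \lambda$. Theorem~\ref{thm:present} is stated for a map that fixes the origin, whereas the AES S-Box $\gamma$ does not. I would handle this by absorbing the constant into the translation group: setting $c \deq 0\gamma \in \F_2^8$ and $\tilde\gamma \deq \gamma\sigma_c \in \Sym(\F_2^8)$, one has $0\tilde\gamma = 0$ and $\gamma' = \tilde\gamma'\sigma_{(c,c,c,c)}$. Since $\sigma_{(c,c,c,c)} \in T_{32}$, this gives $\lambda\tilde\gamma' = \rho_{\mathrm{AES}}\sigma_{(c,c,c,c)} \in \Span{\rho_{\mathrm{AES}}, T_{32}}$, hence $\Span{\rho_{\mathrm{AES}}, T_{32}} = \Span{\lambda\tilde\gamma', T_{32}}$, and the latter is precisely of the form covered by Theorem~\ref{thm:present}.

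It would then remain to verify, with $f = \tilde\gamma$ and $\delta = 2$, the three hypotheses. First, that $\lambda$ preserves no proper non-trivial direct sum of bricks: the RotWord map acts as a $4$-cycle on the byte-bricks $V_1, V_2, V_3, V_4$, and the only subsets of $\{1,2,3,4\}$ stable under a $4$-cycle are $\emptyset$ and $\{1,2,3,4\}$. Second, that $\tilde\gamma$ is $4$-differentially uniform: since $\tilde\gamma$ differs from $\gamma$ by a constant, its derivatives $\partial_a\tilde\gamma$ coincide with those of $\gamma$, and the AES S-Box is the archetypal $4$-differentially uniform permutation, being an affine image of inversion in $\F_{2^8}$. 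Third, that $\tilde\gamma$ is $1$-anti-invariant, meaning that no $7$-dimensional subspace of $\F_2^8$ is mapped by $\tilde\gamma$ onto a $7$-dimensional subspace.

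The $1$-anti-invariance is the only point that is not immediate, and I expect it to be the main obstacle; however, it is already established in the primitivity literature on the AES confusion layer (cf.~\cite{aragona2018primitivity,carantiprimitive}), essentially because the inversion map on $\F_{2^{2k}}$ with $k \geq 2$ sends no hyperplane to a hyperplane, and the AES affine wrapper preserves the subspace dimensions involved. Granted the three hypotheses, Theorem~\ref{thm:present} yields primitivity of $\Span{\lambda\tilde\gamma', T_{32}} = \Span{\rho_{\mathrm{AES}}, T_{32}}$, which is the desired conclusion.
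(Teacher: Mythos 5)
Your overall strategy --- reduce to Theorem~\ref{thm:present} with $s=8$, $b=4$, absorb the S-Box constant $0\gamma$ into $T_{32}$, and verify the three hypotheses for the normalized S-Box and for \texttt{RotWord} --- is exactly the paper's, and your verification of those hypotheses (the $4$-cycle argument for $\lambda$ on the byte-bricks, $4$-differential uniformity, $1$-anti-invariance via the inversion map on $\F_{2^8}$) matches what the paper invokes. There is, however, one step that fails as written: the order of composition. In the paper's postfix convention (which you are also using, since your identity $\gamma'=\tilde\gamma'\sigma_{(c,c,c,c)}$ only holds in that convention), Theorem~\ref{thm:present} covers maps of the form $F\Lambda$, i.e.\ the brick-wise substitution $F$ applied \emph{first} and the linear map $\Lambda$ applied second. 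But $\rho_{\mathrm{AES}}=\lambda\gamma'$ applies \texttt{RotWord} first and \texttt{SubBytes} second, so $\lambda\tilde\gamma'$ is of the form $\Lambda F$, not $F\Lambda$; it is not ``precisely of the form covered'' by the theorem, and your instantiation ``$F$ the parallel S-Box layer, $\Lambda=\lambda$'' does not apply to it.

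The gap is small and can be closed in either of two ways. The paper's route is to pass to the inverse $(\lambda\gamma')^{-1}=(\gamma')^{-1}\lambda^{-1}$, which \emph{is} of the form $F\Lambda$; since $\Span{g,T_{32}}=\Span{g^{-1},T_{32}}$, primitivity of that group is the same statement, but one must then check the hypotheses for $\gamma^{-1}$ and $\lambda^{-1}$ rather than for $\gamma$ and $\lambda$ (the paper cites the fact that differential uniformity and anti-invariance are preserved under inversion, and the brick condition for $\lambda^{-1}$ is immediate). Alternatively, staying with your data: $\Lambda F=\Lambda(F\Lambda)\Lambda^{-1}$ is a conjugate of $F\Lambda$ by a linear map, and linear maps normalize $T_{32}$, so $\Span{\Lambda F,T_{32}}$ is conjugate in $\Sym(\F_2^{32})$ to $\Span{F\Lambda,T_{32}}$ and hence primitive if and only if the latter is; this lets you keep your verification for $\tilde\gamma$ and $\lambda$ unchanged. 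One of these observations must be added; without it the appeal to Theorem~\ref{thm:present} is a misapplication.
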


\begin{proof}
Let $\lambda \in \GL(V)$ and $\gamma' \in \Sym(V)$, as in Definition~\ref{def:opaes}, be respectively the \texttt{RotWord} transformation and the parallel application of 4 copies of $\gamma$, the
S-Box \texttt{SubBytes}. It is well know that $\gamma$ is, up to affine transformations, the function which sends $0$ into $0$ and each non-zero element into its multiplicative inverse in $\F_{2^s}$. Such a function is $4$-differentially uniform and $1$-anti invariant, i.e.\ satisfies the hypotheses of Theorem~\ref{thm:present} for $\delta=2$~\cite{nyberg1993differentially,carantiprimitive}. Notice that anti-invariance and differential uniformity are invariant under inversion~\cite{carlet1998codes} and under affine transformations, i.e.\ also $\gamma^{-1}$ satisfies the
hypotheses of Theorem~\ref{thm:present}. Moreover, it easily checked that no non-trivial and proper direct sum of bricks of $V$ is invariant under $\lambda$, and the same trivially holds also for $\lambda^{-1}$. Therefore, from Theorem~\ref{thm:present}, $\Span{(\gamma')^{-1}\lambda^{-1}, T_{32}}$ is primitive, and consequently so is $\Span{\lambda\gamma', T_{32}} = \Span{\rho_{\mathrm{AES}}, T_{{32}}}$.
\end{proof}

The following final conclusion is derived.
 
\begin{corollary}\label{main:coro}
The group $\Span{\overline{\rho_{\mathrm{AES}}},T_{128}}< \Sym(\F_2^{128})$ generated by the transformations of the AES-128 key-schedule is primitive.
\end{corollary}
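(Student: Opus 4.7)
The plan is to obtain Corollary~\ref{main:coro} as a direct instance of the primitivity reduction of Theorem~\ref{thm:main}, specialized to $n=32$ and $\rho = \rho_{\mathrm{AES}}$. With this choice one has $V = \F_2^{32}$, so $V^4 = \F_2^{128}$, and the group $\Span{\overline{\rho_{\mathrm{AES}}}, T_{128}}$ appearing in the corollary is precisely the group $\Span{\overline{\rho}, T_{4n}}$ whose primitivity is delivered by Theorem~\ref{thm:main}. The entire argument therefore reduces to checking that the two hypotheses of Theorem~\ref{thm:main} hold in this setting.

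The first hypothesis, that $\Span{\rho_{\mathrm{AES}}, T_{32}}$ acts primitively on $V$, is exactly the content of Theorem~\ref{thm:main2}, which has just been established via the general criterion of Theorem~\ref{thm:present} applied to $f = \gamma^{-1}$ and $\Lambda = \lambda^{-1}$. So no further work is required here beyond citing it.

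The second hypothesis, $\rho_{\mathrm{AES}} \in \Sym(V) \setminus \AGL(V)$, needs only the elementary observation that the AES S-Box is not affine. Since $\rho_{\mathrm{AES}} = \lambda\gamma'$ with $\lambda \in \GL(V)$ and $\gamma'$ the parallel application of four copies of $\gamma$, it is enough to show that $\gamma$ is non-affine: indeed, if $\gamma$ were affine then $\gamma'$ would be affine, and hence so would $\rho_{\mathrm{AES}}$. Non-affinity of $\gamma$ can be read off from its differential properties recalled in the proof of Theorem~\ref{thm:main2}: any affine bijection $f$ of $\F_2^s$ has a constant derivative $\partial_a(f)$ for every $a \ne 0$, making it $2^s$-differentially uniform, whereas $\gamma$ is $4$-differentially uniform on $\F_2^8$ with $4 < 2^8$. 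Equivalently, one may simply invoke the fact that $\gamma$ is affinely equivalent to the inversion map in $\F_{2^8}$, which is well known not to be affine.

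Once these two verifications are in place, Theorem~\ref{thm:main} applies and yields the primitivity of $\Span{\overline{\rho_{\mathrm{AES}}}, T_{128}}$. The main obstacle has already been absorbed into the preceding results: all of the technical content (the Goursat-style reduction and the SPN primitivity criterion) is carried by Theorems~\ref{thm:main} and \ref{thm:main2}, so the corollary itself is essentially a bookkeeping step matching the AES parameters to the hypotheses of the general reduction.
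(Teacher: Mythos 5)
Your proposal is correct and follows exactly the route the paper intends: Corollary~\ref{main:coro} is obtained by combining Theorem~\ref{thm:main} (with $n=32$, $\rho=\rho_{\mathrm{AES}}$) with Theorem~\ref{thm:main2}, and your explicit check that $\rho_{\mathrm{AES}}\notin\AGL(V)$ via the $4$-differential uniformity of $\gamma$ is a correct (and welcome) verification of a hypothesis the paper leaves implicit.
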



\section{The primitivity reduction - Proof of Theorem~\ref{thm:main}}\label{sec:proof}
This section is entirely devoted to the proof of Theorem~\ref{thm:main}, which may be skipped entirely from the reader who is not interested in the technical details. 
Despite its apparent complexity, the (repeated) use of Goursat's Lemma, which is introduced below, represents a reasonable way to describe any generic subspace $U$ which is candidate to be a linear block (and which, \emph{a fortiori}, is necessarily  trivial).\\

In order to prove our result, we need to determine a block system for $V^4 = V^2 \times V^2$, i.e.\ the set the cosets 
of a suitable subgroup of $V^2\times V^2$.
This can be accomplished by using the following 
characterization of subgroups of the direct product  of two groups in terms of
suitable sections of the direct factors~\cite{goursat}.  

\begin{theorem}[Goursat's Lemma]\label{gours}
  Let $G_1$  and $G_2$ be two  groups. There
  exists  a bijection between  
  \begin{enumerate}
  \item 
    the set  of all subgroups  of the 
    direct  product  $G_1\times   G_2$,  and  
  \item 
    the  set   of  all  triples
    $(A/B,C/D,\psi )$ where 
    \begin{itemize}
    \item $A$ is a subgroup of $G_{1}$,
    \item $C$ is a subgroup of $G_{2}$,
    \item $B$ is a normal subgroup of $A$,
    \item $D$ is a normal subgroup of $C$, 
    \item $\psi: A/B\to C/D$ is a group isomorphism.
    \end{itemize}
  \end{enumerate}
  
\noindent Then, each subgroup of $ U \leq G_1\times G_2$ can be uniquely
  written as
  \begin{equation}\label{defU}
   U =  U_{\psi}= \{
      (a,c) \in A \times C 
      \mid
      (a + B) \psi =c + D
      \}.
  \end{equation}
\end{theorem}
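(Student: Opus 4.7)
The plan is to establish the bijection explicitly in both directions, and then verify that the two constructions are mutually inverse.

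\textbf{Forward direction (subgroup $\Rightarrow$ quintuple).} Given any subgroup $U\leq G_1\times G_2$, I would set
\[
A \deq \pi_1(U),\quad C \deq \pi_2(U),\quad B \deq \{a\in G_1\mid (a,0)\in U\},\quad D \deq \{c\in G_2\mid (0,c)\in U\},
\]
where $\pi_i$ is projection onto the $i$-th factor. That $A\leq G_1$ and $C\leq G_2$ is immediate since projections are homomorphisms; $B\leq A$ and $D\leq C$ are clear. For normality of $B$ in $A$: given $a\in A$ pick $c$ with $(a,c)\in U$; for $b\in B$, $(b,0)\in U$, and conjugating in $U$ gives $(-a+b+a,0)\in U$, so $-a+b+a\in B$. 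Analogously $D\trianglelefteq C$. I would then define
\[
\psi: A/B\to C/D,\qquad (a+B)\psi \deq c+D,\text{ where } (a,c)\in U.
\]
Well-definedness: if $(a,c_1),(a,c_2)\in U$ then $(0,c_2-c_1)\in U$, whence $c_2-c_1\in D$. The map $\psi$ is a homomorphism because $U$ is closed under the group operation, surjective by definition of $C$, and injective because $(a+B)\psi=D$ forces $(a,d)\in U$ with $d\in D$; subtracting $(0,d)\in U$ yields $(a,0)\in U$, hence $a\in B$.

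\textbf{Reverse direction (quintuple $\Rightarrow$ subgroup).} Given a quintuple $(A/B,C/D,\psi)$ satisfying the hypotheses, I would define $U_\psi$ exactly as in~\eqref{defU} and verify it is a subgroup of $G_1\times G_2$. Closure under the operation and inverses is a direct consequence of $\psi$ being a group homomorphism between the quotients: if $(a_1,c_1),(a_2,c_2)\in U_\psi$ then $(a_1+a_2+B)\psi=(a_1+B)\psi+(a_2+B)\psi=c_1+c_2+D$, and similarly for inverses.

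\textbf{Bijectivity of the correspondence.} To close the loop, I would show that the two constructions are mutually inverse. Starting from $U$, construct the quintuple $(A,B,C,D,\psi)$ and then $U_\psi$; the inclusion $U\subseteq U_\psi$ is immediate from the definition of $\psi$, while $U_\psi\subseteq U$ follows from the fact that any $(a,c)\in U_\psi$ can be written as $(a,c')+(0,c-c')$ with $(a,c')\in U$ coming from surjectivity of $\pi_1\vert_U$ and $c-c'\in D\subseteq\pi_2(U)$. Conversely, starting from a quintuple, building $U_\psi$ and reading off its associated data recovers the original $A,B,C,D,\psi$ by direct inspection of the defining equation.

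\textbf{Main obstacle.} The delicate point is the well-definedness and bijectivity of $\psi$, together with the round-trip check that the subgroup-to-quintuple and quintuple-to-subgroup procedures recover the same data. All of this is essentially bookkeeping, but one must be careful to identify precisely why an element $(a,c)\in A\times C$ lying outside $U$ fails the defining relation of $U_\psi$, and this is where the normality of $B$ and $D$ (ensuring the quotients make sense) together with the homomorphism structure of $\psi$ intervene in an essential way.
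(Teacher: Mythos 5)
Your proof is correct, and it is the standard argument for Goursat's Lemma. There is nothing in the paper to compare it against: the authors state the theorem as a known result with a citation to Goursat and give no proof, using it only as a tool (via Corollary~\ref{lemma:psiforphi}) to parametrize candidate linear blocks. Your explicit two-way construction --- extracting $(A,B,C,D,\psi)$ from a subgroup $U$ via the projections and the fibres over the identities, building $U_\psi$ from a triple, and checking the two maps are mutually inverse --- is exactly what a self-contained proof requires, and all the key verifications (normality of $B$ and $D$, injectivity and surjectivity of $\psi$, the two inclusions $U\subseteq U_\psi$ and $U_\psi\subseteq U$) are present. Two small points to tighten. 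First, in checking that $\psi$ is well defined you only verify independence of the choice of $c$ for a \emph{fixed} $a$; you must also verify independence of the representative of the coset $a+B$. This follows at once from what you already have: if $a'=a+b$ with $b\in B$, then $(a',c)=(a,c)+(b,0)\in U$, so the same $c$ serves for $a'$ and the case you treated applies --- but it should be said, since you yourself flag well-definedness as the delicate step. Second, in the inclusion $U_\psi\subseteq U$ the operative fact is that $c-c'\in D$ gives $(0,c-c')\in U$ by the very definition of $D$, not merely that $c-c'$ lies in $\pi_2(U)$; as written, the appeal to $D\subseteq\pi_2(U)$ points at the wrong property. Neither issue affects the validity of the argument.
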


Note that the isomorphism $\psi$ induces a homomorphism $\varphi: A \to C$ where $a\mapsto a\phi$  is such that $(a+B)\psi=a\varphi + D$ for any $a\in A$, and such that $B\varphi\leq D$. Such a homomorphism is not necessarily unique. 
\begin{corollary}
\label{lemma:psiforphi}
  Using notation of Theorem~\ref{gours}, given any homomorphism $\phi$ induced by $\psi$, we have 
 \begin{equation*}
    U_{\psi}
    =
    \{
      (a, a \varphi + d)
      \mid
      a \in A, d \in D
      \}.
  \end{equation*}
\end{corollary}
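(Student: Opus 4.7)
The plan is to unwind the definition of $U_\psi$ given in equation~\eqref{defU} using the defining relation between $\psi$ and the induced homomorphism $\varphi$, and then verify both inclusions directly.

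First I would recall that, by assumption, $\varphi: A \to C$ is a homomorphism satisfying $(a+B)\psi = a\varphi + D$ for every $a \in A$. This is precisely the compatibility condition that relates the coset-level isomorphism $\psi$ with the element-level map $\varphi$, and it is the only property of $\varphi$ I need.

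Next I would prove the two inclusions. For the inclusion $U_\psi \subseteq \{(a, a\varphi + d) \mid a \in A, d \in D\}$, I take $(a,c) \in U_\psi$; by definition $(a+B)\psi = c+D$, and substituting the defining relation of $\varphi$ gives $a\varphi + D = c + D$, hence $c = a\varphi + d$ for some $d \in D$. For the reverse inclusion, I take an element $(a, a\varphi + d)$ with $a \in A$ and $d \in D$; note that $a\varphi \in C$ and $d \in D \leq C$ ensure that $a\varphi + d \in C$, so the pair lies in $A \times C$. Then $(a\varphi + d) + D = a\varphi + D = (a+B)\psi$, so the pair satisfies the defining condition of $U_\psi$.

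There is essentially no obstacle here: the statement is a direct translation between the coset-level description of $U_\psi$ and an element-level parametrisation, and the argument is a one-line chain of equalities in each direction. The only minor point worth flagging is that the parametrisation $(a,d) \mapsto (a, a\varphi + d)$ is in general not injective, as different pairs may yield the same element of $U_\psi$ (precisely when the discrepancies lie in $B\varphi \leq D$); since the corollary only asserts a set-theoretic equality, this non-uniqueness is harmless and no further bookkeeping is required.
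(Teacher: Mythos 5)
Your proof is correct and follows essentially the same route as the paper: both directions are verified by substituting the defining relation $(a+B)\psi = a\varphi + D$ into the coset condition of Eq.~\eqref{defU}. The extra observations (that $a\varphi + d \in C$ and that the parametrisation need not be injective) are harmless additions not present in, but consistent with, the paper's argument.
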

\begin{proof}
Let $(a,c) \in U_{\psi}.$ By definition of $\phi$, $(a+B)\psi = c+D = a\phi+D$, so $c \in a\phi+D$, and therefore there exists $d \in D$ such that $c = a \phi+d$.
Conversely, if $a \in A$ and $d \in D$, then $(a+B)\psi = a\phi +D = a\phi+d+D$.
\end{proof}

\subsection{Use of Goursat's Lemma}\label{rmkGoursat}
Let $U$ be a subspace of $V^4 = V^2 \times V^2$. 
From Theorem~\ref{gours} and Corollary~\ref{lemma:psiforphi} we have that there exist $A,B,C,D \leq V^2$ and $\psi: A/B \rightarrow C/D$ isomorphism inducing an homomorphism $\phi: A \to C$  such that
\[U = \{ (\bm{a}, \bm{a} \varphi + \bm{d}) \mid \bm{a} \in A, \bm{d} \in D\}.\]
 Without loss of generality, a basis of $A$ can be completed to a basis of $\F_2^{2n}$ and $\varphi$ can be arbitrarily defined from the basis of the complement $A^c$ of $A$ to a basis of $(\mathrm{Im}(\phi))^c$. Finally, $\varphi$ can be extended by linearity on the whole space $\F_2^{2n}$, providing us with a matrix representation of $\phi$ as 
\[
\begin{pmatrix} \phi_{11} & \phi_{12}\\ \phi_{21} & \phi_{22}\end{pmatrix}
\] 
such that for each $(a_1,a_2)\in A\leq\F_2^{2n}$
\[
\bm{a}\phi=(a_1,a_2)\phi = (a_1 \phi_{11} + a_2 \phi_{21}, a_1 \phi_{12}+a_2\phi_{22})\deq (\bm{a}\phi_{1},\bm{a}\phi_{2}),
\]
where, for $1 \leq i \leq 2$, 
\[
\phi_{i}=\begin{pmatrix} \phi_{1i} \\ \phi_{2i} \end{pmatrix}.
\] 

\noindent Applying again Goursat's Lemma on $A, D \leq V^2$, we obtain that 
\begin{enumerate}[(i)]
\item\label{des_A} there exist $A',B',C',D' \leq V$ and $\phi_A: A' \to C'$ an homomorphism such that 
\[A = \{ (a', a' \varphi_A + d') \mid a' \in A', d' \in D'\},\]
\item\label{des_D} there exist $A'',B'',C'',D'' \leq V$ and $\phi_D: A'' \to C''$ an homomorphism such that
\[D = \{ (a'', a'' \varphi_D + d'') \mid a'' \in A'', d'' \in D''\}.\]
\end{enumerate}
The previous construction and notation will be used in the remainder of the paper every time a subspace $U$ is considered as a candidate for the linear component of an invariant linear partition. More precisely:
\begin{definition}
A subgroup $  U \leq V^4$ is a \emph{linear block} for $f \in \Sym(V^4)$ if for each $\vec{v}\in V^4$ there exists $\vec{w} \in V^4$ such
that 
\begin{equation*}
(U +\vec{v})f = U+\vec{w},
\end{equation*} where we can always choose $\vec{w} = \vec{v}f.$ 
\end{definition}

When a linear block for ${f}$ is found, by Lemma~\ref{translatioBlocks} $\Span{f, T_{4n}}$ is imprimitive and have the cosets of the linear block as a block system.
By virtue of Lemma~\ref{translatioBlocks}, cosets of linear blocks are indeed the only kind of partitions that can be invariant for the groups under considerations, despite the generality of the definition of partition.  
Notice also that if $f \in \Sym(V^4)$ is such that $\vec 0f=\vec 0$ and $U < V^4$ is a linear block for $f$, then $U$ is an invariant subspace for $f$, i.e.\
for each $\vec u \in U$ there exists $\vec w \in U$ such that $\vec uf=\vec w$. The relation  $Uf= U$ can be also expressed, in the notation of this section, as 
\begin{equation}\label{lin_block}
\forall \bm{a} \in A \forall \bm{d} \in D \, \exists \bm{x} \in A \exists \bm{d} \in D: (\bm{a},\bm{a}\phi+\bm{d})f=(\bm{x},\bm{x}\phi+\bm{y}).
\end{equation}
We will use Eq.~\eqref{lin_block} extensively in the next results when considering functions with linear blocks, sometimes without explicit mention.

\subsection{The proof}
We are now ready to show the steps to prove Theorem~\ref{thm:main}. In the remainder of the paper we will make use of the notation introduced in Sec.~\ref{rmkGoursat} and we assume, without loss of generality and only for the  sake of simplicity, that $0\rho = 0$.
This is possible since each possible translation is considered in the group under investigation (cf. Definition~\ref{def:gr}).

The next result is the starting point for the proof of Theorem~\ref{thm:main}: we will show that assuming the existence of a linear block for $\rb$, i.e.\ exploiting an invariant subspace for $\rb$, leads to the discovery of a (possibly trivial) invariant subspace for $\rho$. Notice that our main claim follows straightforwardly from Lemma~\ref{lem_new} when such a subspace is non-trivial. In the reminder of the paper we will discuss the remaining cases. 

\begin{lemma}\label{lem_new}
Let $\rho \in \Sym(V)$ and let $U \leq V^4$ be a linear block for $\rb$. In the notation of Sec.~\ref{rmkGoursat} we have $  D''\rho=D''$.
\end{lemma}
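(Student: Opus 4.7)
The aim is to prove $D''\rho=D''$. Because $\rho$ is a bijection of the finite space $V$, the inclusion $D''\rho\subseteq D''$ is enough, and by the Goursat description of $U$ (Sec.~\ref{rmkGoursat}) this inclusion amounts to the implication: $(0,0,0,d'')\in U \Rightarrow (0,0,0,d''\rho)\in U$. The plan is to exhibit $(0,0,0,d''\rho)$ as an explicit $\mathbb{F}_2$-linear combination of iterated images of $(0,0,0,d'')$ under $\rb$ and $\rb^{-1}$.

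\textbf{Main idea.} Two structural properties of $U$ will be used throughout: $U$ is an $\mathbb{F}_2$-subspace, and both $\rb$ and $\rb^{-1}$ preserve it (the latter because $\rb$ is a bijection and $\vec 0\rb=\vec 0$ under the running assumption $0\rho=0$). Starting from the element $(0,0,0,d'')\in U$, which lies in $U$ by the very definition of $D''$, the quantity $d''\rho$ is then propagated through the four coordinates by iterating $\rb^{-1}$, whose linear part is essentially upper triangular, while spurious terms generated along the way are cancelled by adding suitable auxiliary elements of $U$ produced in parallel.

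\textbf{Key steps.} First, applying $\rb^{-1}$ to $(0,0,0,d'')$ yields $(d''\rho,0,0,d'')$, and summing with the original gives $(d''\rho,0,0,0)\in U$. Three further iterations of $\rb^{-1}$ produce, in turn, $(d''\rho,d''\rho,0,0)$, $(d''\rho,0,d''\rho,0)$ and, because the last two coordinates now sum to $d''\rho\neq 0$ and drag in the iterated term $(d''\rho)\rho$, the element
\[
\bigl(d''\rho+(d''\rho)\rho,\;d''\rho,\;d''\rho,\;d''\rho\bigr)\in U.
\]
On the other hand, $(d''\rho,0,0,0)\rb=(d''\rho,d''\rho,d''\rho,d''\rho)\in U$; subtracting this diagonal element from the preceding one isolates $((d''\rho)\rho,0,0,0)\in U$. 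A second, parallel chain---starting from the diagonal $(d''\rho,d''\rho,d''\rho,d''\rho)$, adding $(d''\rho,0,0,0)$ to obtain $(0,d''\rho,d''\rho,d''\rho)$, and then iterating $\rb^{-1}$ and summing appropriately---produces $(0,0,d''\rho,0)\in U$. A further application of $\rb^{-1}$ gives $((d''\rho)\rho,0,d''\rho,d''\rho)\in U$. Finally, summing in $\mathbb{F}_2$ the four elements $((d''\rho)\rho,0,d''\rho,d''\rho)$, $(d''\rho,0,d''\rho,0)$, $(d''\rho,0,0,0)$ and $((d''\rho)\rho,0,0,0)$ cancels the first three coordinates and leaves $(0,0,0,d''\rho)\in U$, i.e.\ $d''\rho\in D''$.

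\textbf{Main obstacle.} The non-linearity of $\rho$ is the source of the difficulty: each application of $\rb^{-1}$ to a vector whose last two coordinates sum to something non-zero injects a new iterated term of the form $(d''\rho)\rho$, which cannot be eliminated by the same $\mathbb{F}_2$-relation that removes the linear $d''\rho$ contributions. The remedy is to build the auxiliary element $((d''\rho)\rho,0,0,0)\in U$ \emph{in advance} and use it as a correction in the final linear combination; without this preparatory step the direct approach would leave a stubborn $(d''\rho)\rho$-residue in the first coordinate.
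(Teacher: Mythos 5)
Your proposal is correct and follows essentially the same strategy as the paper: reduce the claim to showing $(0,0,0,d''\rho)\in U$, then obtain that element as an $\mathbb{F}_2$-linear combination of images of $(0,0,0,d'')$ under powers of $\rb$ and $\rb^{-1}$, and conclude $D''\rho=D''$ from bijectivity of $\rho$. The paper reaches the same vector much more directly, by adding just $(0,0,0,d'')\rb=(d''\rho,d''\rho,d''\rho,d''+d''\rho)$ and $(0,0,0,d'')\rb^{-3}=(d''\rho,d''\rho,d''\rho,d'')$, so your longer chain (and the auxiliary element $((d''\rho)\rho,0,0,0)$) is valid but unnecessary.
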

\begin{proof}
\noindent Since $U$ is a linear block for $\rb$, taking $\bm{a}=\bm{0}$ in Eq.~\eqref{lin_block} and considering the description of $D$ as a subgroup of $\F_2^{2n}$ (cf.\ \ref{des_D} in Sec.~\ref{rmkGoursat}), for each $a'' \in A''$ and $d'' \in D''$, we have $(0,0,a'',a''\phi_D+d'')\in U$.
Moreover, assuming $a''=0$  and noticing that $U$ is a linear block for each element of $\Span{\rb} \leq \Sym(V^4)$, we have
 $
 (0,0,0,d'')\rb=(d''\rho,d''\rho,d''\rho,d''+d''\rho)\in U 
 $
 and
 $
 (0,0,0,d'')\rb^{-3}=(d''\rho,d''\rho,d''\rho,d'')\in U. 
 $
Therefore 
\begin{equation}\label{eq_new}
(d''\rho,d''\rho,d''\rho,d''+d''\rho)+(d''\rho,d''\rho,d''\rho,d'')=(0,0,0,d''\rho)\in U.
\end{equation} Hence, there exist 
  $\bm{x} \in A$ and $\bm{y} \in D$ such that $(0,0,0,d''\rho)=(\bm{x},\bm{x}\phi+\bm{y})$, and so $\bm{x}=\bm{0}$ and $(0,d''\rho)=\bm{y}\in D$. 
  From $(0,d''\rho)\in D$ we have that there exist $x''\in A''$ and $y''\in D''$ such that $x''=0$ and $d''\rho =y''\in D''$, which leads, since $\rho$ is a permutation, to
  $
  D''\rho=D'',
  $ 
  as claimed.
\end{proof}

We will use Lemma~\ref{lem_new} to prove that if $\Span{\rb,T_{4n}}$ is imprimitive, then an imprimitivity block for $\Span{\rho, T_n}$ can be found.
From Lemma~\ref{lem_new}, $D''$ is a natural first candidate for an imprimitivity block for $\Span{\rho, T_n}$.  The proof of Theorem~\ref{thm:main} is organized as follows: assuming that $U$ is an imprimitivity block for $\Span{\rb,T_{4n}}$, from Lemma~\ref{lem_new} we have that $D''$ is a block for $\rho$. When $D''$ is non-trivial and proper there is nothing left to prove. In the case $D'' = \F_2^n$
we derive a contradiction and in the case $D'' = \{0\}$ we prove that, instead, $C''$ is a block for  $\rho$. As before, the proof is completed when $C''$ is non-trivial and proper
and a contradiction is derived when $C'' = \F_2^n$. In the remaining case  $C'' = \{0\}$, $A'$ is proved to be a block for $\rho$, and the extremal possibilities for $A'$ are excluded by 
way of contradictions. In order to prove what anticipated, the following technical lemma is needed in some of the sub-cases.

\begin{lemma}\label{cond}
Let $\rho \in \Sym(V)$ and let $U \leq V^4$ be a linear block for $\rb$. In the notation of Sec.~\ref{rmkGoursat}, if $D=\{\bm{0}\}$ we have
\begin{enumerate}[(1)]
\item \label{item_Aphi=A} $A=A\phi$;
\item \label{item_a2inA'} if $(a_1,a_2)\in A$, then $a_1, a_2 \in A'$.
\end{enumerate}
\end{lemma}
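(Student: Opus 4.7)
Since $\bm{0}\rho = \bm{0}$ implies $\vec 0 \rb = \vec 0$, the hypothesis that $U$ is a linear block for $\rb$ gives that $U$ is invariant under both $\rb$ and $\rb^{-1}$. Under the additional assumption $D = \{\bm 0\}$, the Goursat description of $U$ collapses to the graph
\[
U = \{(\bm a, \bm a\phi) \mid \bm a \in A\},
\]
and, moreover, $A\phi = C$ (since the induced $\phi\colon A\to C$ has image exactly $C$ when $D=\{\bm 0\}$). Hence claim~\eqref{item_Aphi=A} is equivalent to $A = C$ as subgroups of $V^2$, while claim~\eqref{item_a2inA'} is equivalent to $C' \subseteq A'$, the inclusion $a_1 \in A'$ being definitional and $a_2 \in C'$ automatic.

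The plan is to extract both statements by explicit computation with $\rb^{\pm 1}$ from Definition~\ref{def:op}. For claim~\eqref{item_a2inA'} I first apply $\rb^{-1}$ to a generic element $v = (a_1, a_2, a_3, a_4) \in U$, obtaining
\[
v\rb^{-1} = \bigl(a_1 + (a_3 + a_4)\rho,\ a_1 + a_2,\ a_2 + a_3,\ a_3 + a_4\bigr) \in U.
\]
The first two entries must lie in $A$, so $a_1 + a_2 \in C'$; since $a_2 \in C'$ automatically, this gives $a_1 \in C'$, establishing $A' \subseteq C'$. For the reverse inclusion $C' \subseteq A'$, I specialise $v$ to an element of the form $(0, c, (0,c)\phi_1, (0,c)\phi_2)$ with $c \in D'$ (a legitimate choice by the Goursat description~\ref{des_A} of $A$), apply $\rb^{\pm 1}$, and exploit both the Goursat decomposition of $A$ and the already-proved inclusion $A' \subseteq C'$ to pull $c$ into $A'$; since the elements $c \in D'$ together with $B' \subseteq A'$ generate $C'$, this is enough.

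For claim~\eqref{item_Aphi=A} I use the complementary information: the last two entries $(a_2 + a_3, a_3 + a_4)$ of $v\rb^{-1}$ must belong to $C = A\phi$. Subtracting the known element $(a_3, a_4) \in C$ yields $(a_2, a_3) \in C$ for every $(a_1, a_2) \in A$, and a symmetric computation with $\rb$ (whose image produces $(a_4\rho, a_1+a_4\rho)\in A$ after subtracting $(a_1,a_2)$) furnishes analogous elements of $A$. Combining these membership relations with claim~\eqref{item_a2inA'} (which forces $A'=C'$) and the homomorphism property of $\phi$ allows one to match each element of $A$ with an element of $C$ and vice versa, producing the two inclusions $A \subseteq C$ and $C \subseteq A$.

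The main obstacle will be the reverse inclusion $C' \subseteq A'$ in claim~\eqref{item_a2inA'}: because $\rb$ contains non-linearity only through the last-coordinate map $\rho$, the image equations entangle $c$ with values of $\rho$ in a way that a single application of $\rb^{\pm 1}$ cannot disentangle, and it will be necessary to combine two or more applications together with the Goursat description of $A$ to isolate $c$ itself as an element of $A'$. Once $A' = C'$ is in hand, claim~\eqref{item_Aphi=A} reduces to bookkeeping with the subgroup structure of $A$ and $C$ and the homomorphism $\phi$.
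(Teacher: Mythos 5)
Your general technique is the right one --- with $D=\{\bm 0\}$ the block $U$ is the graph $\{(\bm a,\bm a\phi)\mid \bm a\in A\}$, and one reads off membership relations in $A$ and in $C=A\phi$ from the first and last pairs of coordinates of images under powers of $\rb^{\pm1}$; your first extraction, $(a_2,\bm a\phi_1)\in A\phi$ from $v\rb^{-1}$, is exactly the paper's opening move. But the logical architecture is inverted in a way that leaves a real gap. The paper proves claim~(1) \emph{first}: besides the $\rb^{-1}$ computation it applies $\rb^{-2}$ to $(\bm a,\bm a\phi)$, whose last two coordinates are $(a_1+\bm a\phi_1,\,a_2+\bm a\phi_2)=\bm a+\bm a\phi\in A\phi$; since $\bm a\phi\in A\phi$ this gives $\bm a\in A\phi$, hence $A\leq A\phi$ and equality by cardinality. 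Claim~(2) is then immediate: $(a_2,\bm a\phi_1)\in A\phi=A$, and the first coordinate of any element of $A$ lies in $A'$ by the Goursat description, so $a_2\in A'$ (and $a_1\in A'$ is definitional). Your plan omits the $\rb^{-2}$ step entirely, and replaces this chain with ``combining these membership relations with claim~(2) \dots allows one to match each element of $A$ with an element of $C$,'' which is not an argument.

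The deeper problem is your route to claim~(2), which you correctly identify as the inclusion $C'\subseteq A'$ and correctly flag as the main obstacle --- but the resolution you sketch does not work. You propose to show $D'\subseteq A'$ and conclude because ``the elements $c\in D'$ together with $B'\subseteq A'$ generate $C'$.'' That assertion is false in general: by Goursat applied to $A$, the second coordinates of elements of $A$ sweep out $A'\phi_A+D'=C'$, so $C'$ is generated by $D'$ together with $A'\phi_A$, not with $B'$; one only knows $B'\phi_A\leq D'$, and there is no reason for $A'\phi_A$ to lie in $B'+D'$ (the quotient $C'/D'\cong A'/B'$ can be nontrivial). So even granting the unspecified computation that would put $D'$ inside $A'$, the inclusion $C'\subseteq A'$ does not follow. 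Since your proof of claim~(1) is in turn made to depend on claim~(2), both parts are left unestablished. The fix is to reverse the order: prove $A=A\phi$ directly via the $\rb^{-1}$ and $\rb^{-2}$ computations (no appeal to claim~(2) is needed there), and then claim~(2) is a one-line consequence.
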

\begin{proof}
Let us address each claim separately.
 Since $U$ is a linear block for $\rb$ such that $D=\{\bm{0}\}$, it means that
$U = \{ (\bm{a}, \bm{a} \varphi ) \mid \bm{a} \in A\}$
is a linear block also for $\rb^{-1}$ (cf.\ Definition~\ref{def:op} for the inverse of $\rb$) and, as in Eq.~\eqref{lin_block}, assuming $\bm{d}=(0,0)$ and $\bm{y}=(0,0)$, we have 
that for each $\bm{a}=(a_1,a_2) \in A$  there exists $\bm{x} \in A$  such that 
$
(\bm{a},\bm{a}\phi)\rb^{-1}=(\bm{x},\bm{x}\phi).
$
This means that
\begin{equation*}
\begin{split}
(a_1,a_2,\bm{a}\phi_1,\bm{a}\phi_2)\rb^{-1} &=(a_1 + (\bm{a}\phi_1+\bm{a}\phi_2)\rho, a_1+a_2,a_2+\bm{a}\phi_1,\bm{a}\phi_1+\bm{a}\phi_2)\\ 
& = (\bm{x},\bm{x}\phi).
\end{split}
\end{equation*}
Hence $\bm{x}\phi=(a_2+\bm{a}\phi_1,\bm{a}\phi_1+\bm{a}\phi_2) = (a_2,\bm{a}\phi_1)+(\bm{a}\phi_1,\bm{a}\phi_2) \in A\phi$.
Since $\bm{x}\phi$ and  $\bm{a}\phi$ belong to $A\phi$,  we have $(a_2,\bm{a}\phi_1)\in A\phi$.
Similarly, for each $\bm{a}=(a_1,a_2) \in A$, there exists $\bm{x} \in A$  such that 
\begin{equation*}
\begin{split}
(\bm{a},\bm{a}\phi)\rb^{-2} &=(a_1 + \xi +(a_2+\bm{a}\phi_2)\rho , a_2+ \xi,a_1+\bm{a}\phi_1,a_2+\bm{a}\phi_2)\\ 
& = (\bm{x},\bm{x}\phi),
\end{split}
\end{equation*}
where $\xi$ denotes $(\bm{a}\phi_1+\bm{a}\phi_2)\rho$.
Hence $(a_1+\bm{a}\phi_1,a_2+\bm{a}\phi_2)=(a_1,a_2)+(\bm{a}\phi_1,\bm{a}\phi_2)\in A\phi$ and so $(a_1,a_2)\in A\phi$, which proves $A\leq A\phi$ and, from $\Size{A}\geq\Size{A\phi}$, we obtain claim~\ref{item_Aphi=A}.
 Moreover, since we have already proved that $(a_2,\bm{a}\phi_1)\in A\phi=A$,  by the description of $A$ as subgroup of $\F_2^{2n}$ (cf.\ \ref{des_A} in Sec.~\ref{rmkGoursat}), there exist $x'\in A'$ and $y'\in D'$ such that $(a_2,\bm{a}\phi_1)=(x',x'\phi_A+y')$, and so $a_2=x'\in A'$. Similarly, for each $(a_1,a_2)\in A$ we have $a_1\in A'$, i.e.\ claim~\ref{item_a2inA'} is obtained. \qedhere
\end{proof}

We now use the previous lemma to show our main result, in which we prove that, in general, the AES-like key-schedule construction generates a primitive permutation group,
provided that the key-schedule operator $\rb$ is induced by a permutation $\rho$ such that $\Span{\rho, T_n}$ is primitive. As already anticipated, the proof is organized in several steps. We will proceed as described in the paragraph after Lemma~\ref{cond}.

\begin{proof}[Proof of Theorem~\ref{thm:main}]
Let us assume that $\Span{\rb,T_{4n}}$ is imprimitive, i.e.\ that there exists
a block system $\mathcal U$ for $\Span{\rb, T_{4n}}$. Then, from Lemma~\ref{translatioBlocks},  the block system 
is of the type
 \[
 \mathcal U = \{U + \vec{v} \mid \vec{v}\in V^4\}
 \]
for a non-trivial and proper subspace $U$ of $V^4$. From Lemma~\ref{lem_new}  we have $  D''\rho=D''$ and
 the previous equation means that the subgroup $D'' \leq V$, when non-trivial and proper, is an imprimitivity block for $\Span{\rho,T_n}$. 
 If that is the case, there is nothing left to prove. 
 Let us conclude the proof addressing the extremal cases $D'' = \F_2^n$ and $D'' = \{0\}$ separately. 
 \medskip 
\begin{description}
	\item[\mybox{$\mathbf{D'' = \F_2^n}$}]  Since $D'' \leq A''\phi_D$, then $A''\phi_D= \F_2^n$ and, from $\Size{A''}\geq\Size{A''\phi_D}$, we also have $A''= \F_2^n$. Therefore $B''=C'' = \F_2^n$, since by hypothesis $A''/B'' \cong C''/D''$. This proves that $D=\F_2^{2n}$. From \ref{item_Aphi=A} of  Lemma~\ref{cond}, we obtain $A=\F_2^{2n}$, and so $B=C= \F_2^{2n}$, since by hypothesis $A/B \cong C/D$. This proves that $U$ is not proper, a contradiction.
	\smallbreak
	\item[\mybox{$\mathbf{D'' = \{0\}}$}] 
Let us prove first that, in this case, also $B''=\{0\}$. Indeed, since $B''\phi_D \leq D''$ and $D''=\{0\}$, we have $B''\phi_D=\{0\}$. If we set $\bm{a}=\bm{0}$ and $a''=b''\in B''$ in Eq.~\eqref{lin_block}, then we have $(0,0,b'',0)\in U$, since $b''\phi_D=0$. Moreover, we have that  
 \[
 (0,0,b'',0)\rb=(0,0,b'',b'')\in U,
 \] 
and so $(0,0,b'',b'')+(0,0,b'',0)=(0,0,0,b'')\in U$, which implies
\[(0,0)\phi+(0,b'')=(0,b'')\in D,\] and so there exists $x''\in A''$ such that $(0,b'')=(x'',x''\phi_D)$, from which $0=0\phi_D=b''$, i.e.\ $B''=\{0\}$.
This also proves that $\phi_D:A'' \rightarrow C''$ is an isomorphism.\\
Now, setting $\bm{a}=\bm{0}$, we have
\[
(0,0,a'',a''\phi_D)\rb=(a''\phi_D\rho,a''\phi_D\rho,a''+a''\phi_D\rho,a''+a''\phi_D+a''\phi_D\rho)\in U
\]
and
\[
(0,0,a'',a''\phi_D)\rb^{-3}=(a''\phi_D\rho,a''\phi_D\rho,a''+(a''+a''\phi_D)\rho,a''+a''\phi_D)\in U.
\]
Therefore there exist $\bm{x}\in A$ and $\bm{y}\in D$ such that
\begin{equation*}
\begin{split}
(0,0,a'',a''\phi_D)\rb+(0,0,a'',a''\phi_D)\rb^{-3}&= (0,0,a''\phi_D\rho+(a''+a''\phi_D)\rho,a''\phi_D\rho)\\
&= (\bm{x},\bm{x}\phi+\bm{y}),
\end{split}
\end{equation*}
and so $(a''\phi_D\rho+(a''+a''\phi_D)\rho,a''\phi_D\rho)\in D$. Hence there exists $x''\in A''$ such that $x''\phi_D+a''\phi_D\rho\in A''\phi_D$ and so $a''\phi_D\rho\in A''\phi_D$. This proves that $A''\phi_D\rho=A''\phi_D$, since $\rho$ is a permutation. Moreover, since $\phi_D$ is an isomorphism, we have $C''\rho=C''$. If $C''$ is a non-trivial and proper subgroup of $V$, then we have determined another imprimitivity block for $\Span{\rho,T_n}$, so the claim is proved. 
 Let us address the extremal cases $C'' = \F_2^n$ and $C'' = \{0\}$ separately. 
 
		\medskip 
		\begin{description}
			\item[\mybox{$\mathbf{C'' = \F_2^n}$}] First notice that $A''=\F_2^{n}$ since, as already proved, $\phi_D$ is an isomorphism. From \ref{item_A''<A'} and \ref{item_A''<D'} of Lemma~\ref{cond}, we have that $A'=D'=\F_2^{n}$, and so  $B'=C'= \F_2^{n}$, since by hypothesis $A'/B' \cong C'/D'$. This proves that $A\phi=A=\F_2^{2n}$. Since $\Size{C}\geq\Size{A\phi}$, then $C=\F_2^{2n}$ and $\phi$ is an automorphism of $A=\F_2^{2n}$. From $\mathrm{Ker}(\phi)=\{\bm{0}\}$, it follows that $B=\{\bm{0}\}$, since $B\leq \mathrm{Ker}(\phi)$. Finally, since $A/B \cong C/D$, we also have  $D=\{\bm{0}\}$, which contradicts the fact that $\phi_D$ is an isomorphism and  $C'' = \F_2^n$.
			\smallbreak
			\item[\mybox{$\mathbf{C'' = \{0\}}$}] Since $\phi_D$ is an isomorphism, we have $C''=D''=B''=A''=\{0\}$, and so $D=\{\bm{0}\}$. Let us now prove that $B=\{\bm{0}\}$. Since $B\phi \leq D$ and $D=\{\bm{0}\}$, then $B\phi=\{\bm{0}\}$. If $(b_1,b_2)\in B$, then $(b_1,b_2)\phi=(0,0)$, and so $(b_1,b_2,0,0)\in U$. Therefore we have
 \[
 (b_1,b_2,0,0)\rb=(b_1,b_1+b_2,b_1+b_2,b_1+b_2)\in U
 \]
 and
 \[
  (b_1,b_2,0,0)\rb^{-1}=(b_1,b_1+b_2,b_2,0)\in U,
 \]
 and so $(b_1,b_1+b_2,b_1+b_2,b_1+b_2)+(b_1,b_1+b_2,b_2,0)=(0,0,b_1,b_1+b_2)\in U$. Therefore, there exists $\bm{x}\in A$ such that $(0,0,b_1,b_1+b_2)=(\bm{x},\bm{x}\phi)$, so we have $(b_1,b_1+b_2)=(0,0)$, which implies $(b_1,b_2)=\bm{0}$. This proves that $B=\{\bm{0}\}$ and that $\phi:A\rightarrow C$ is an isomorphism. From \ref{item_Aphi=A} of Lemma~\ref{cond}, we have that $\phi$ is an automorphism of $A$. Moreover, for each $\bm{a}=(a_1,a_2)\in A$, we have $(a_1,a_2)\phi=(\bm{a}\phi_1,\bm{a}\phi_2)\in A\phi=A$, and by \ref{item_a2inA'} of Lemma~\ref{cond} we obtain $\bm{a}\phi_1,\bm{a}\phi_2\in A'$, and so $\bm{a}\phi_1+\bm{a}\phi_2\in A'$. Consequently, 
 \[
 \mathrm{Im}(\phi_1+\phi_2)=\{\bm{a}\phi_1+\bm{a}\phi_2 \mid \bm{a}\in A\}\leq A'.
 \]
 Notice that $\phi_1+\phi_2$ is surjective, since $\phi=(\phi_1,\phi_2)$ is an invertible matrix, and so $\mathrm{Im}(\phi_1+\phi_2)=A'$.\\
 Now, for each $\bm{a}=(a_1,a_2)\in A$, there exists $\bm{x}\in A$ such that
\begin{equation*}
\begin{split}
(\bm{a},\bm{a}\phi)\rb^{-2} &= (a_1 + \xi  +(a_2+\bm{a}\phi_2)\rho , a_2+ \xi,a_1+\bm{a}\phi_1,a_2+\bm{a}\phi_2)\\ 
& = (\bm{x},\bm{x}\phi),
\end{split}
\end{equation*}
thus we obtain 
\[
(a_1 + \xi +(a_2+\bm{a}\phi_2)\rho , a_2+ \xi)\phi=(a_1,a_2)+(a_1,a_2)\phi \in A\phi=A,
\]
 where $\xi$ denotes here the element $(\bm{a}\phi_1+\bm{a}\phi_2)\rho$.\\
 Therefore
\begin{equation}\label{eq:phi2-1}
((\bm{a}\phi_1+\bm{a}\phi_2)\rho +(a_2+\bm{a}\phi_2)\rho , (\bm{a}\phi_1+\bm{a}\phi_2)\rho)=(a_1,a_2)\phi^{-1}\in A\phi=A.
\end{equation}
Therefore, from   \ref{item_a2inA'} of Lemma~\ref{cond}, for each $\bm{a}\in A$ we have $(\bm{a}\phi_1+\bm{a}\phi_2)\rho)\in A'$, and so we obtain $A'\rho=A'$, since
\[
 \mathrm{Im}(\phi_1+\phi_2)=\{\bm{a}\phi_1+\bm{a}\phi_2 \mid \bm{a}\in A\}= A'
 \]
and $\rho$ is a permutation. As before, the proof is completed when $A'$ is a non-trivial and proper subgroup of $V$, since it represents an imprimitivity block for $\Span{\rho,T_n}$. Otherwise, the following two cases remain to be discussed.
				 \medskip 
				\begin{description}
					\item[\mybox{$\mathbf{A' = \F_2^n}$}] Let us denote by
 \[
 \theta\deq \phi^{-1} = \begin{pmatrix} \theta_{11} &\theta_{12} \\ \theta_{21} & \theta_{22}\end{pmatrix}
 \] 
 and let us denote by
 \[
 \theta_1\deq\begin{pmatrix}\theta_{11}  \\ \theta_{21}\end{pmatrix}
 \text{ and } 
 \theta_2\deq\begin{pmatrix}\theta_{12} \\  \theta_{22}\end{pmatrix}.
 \]
Notice that, from Eq.~\eqref{eq:phi2-1}, 
we have
\[
(\bm{a}\phi_1+\bm{a}\phi_2)\rho=\bm{a}\theta_2,
\]
 which implies that $\rho$ is linear on $\{\bm{a}\phi_1+\bm{a}\phi_2 \mid \bm{a}\in A\}= A'=\F_2^{n}$, a contradiction.
					 \smallbreak 
					\item[\mybox{$\mathbf{A' = \{0\}}$}] First notice that $A'\phi_A=\{0\}$ and also $D'=\{0\}$, since $D'\leq A'\phi_A$. Hence $B'=C'= \{0\}$, since by hypothesis $A'/B' \cong C'/D'$, and so $A=\{\bm{0}\}$. 	Finally, since $\phi$ is an automorphism of $A$, we have $C=\{\bm{0}\}$, and so $U=\{\vec{0}\}$, a contradiction.

				\end{description}
		\end{description}
\end{description}
 \end{proof} 
\begin{remark}
Notice that in Theorem~\ref{thm:main} we have obtained our claim by reaching the contradiction that $D''$ (or $C''$ or $A'')$ is an invariant subspace for $\rho$. We should actually prove that $D''$ generates an invariant partition. However, computations are nearly identical and identically tedious and therefore are not included in this presentation. The intrigued reader may find the same results rewriting the proof of Theorem~\ref{thm:main} obtaining that $(D''+v) \mapsto D''+w$ for some $w \in \mathbb F_2^n$. 
\end{remark}

\section{Conclusions}\label{sec:concl}
In this work we have considered the group $\Gamma_{\mathrm{AES}} = \Span{\overline{\rho_{\mathrm{AES}}}, T_{128}}$ generated by the AES-128 key-schedule transformations and we have proved that no partition of $V^4 = \F_2^{128}$ can be invariant under its action. However, the slow global diffusion of the operator  does not suffice to make the key-schedule  transformation free from invariant linear partitions when the composition of more rounds is considered. 
In particular, since $\lambda^2$ and $\lambda^4$ admit proper and non-trivial invariant subspaces which are a direct sum of bricks of $V$, we can conclude that group generated by  $i$ consecutive key-schedule transformations $\Span{\overline{\rho_{\mathrm{AES}}}^{\,i}, T_{128}}$
is 
\begin{itemize}
\item[-] primitive if $i = 1$ (this work) and
\item[-] imprimitive if $i \in \{0,2 \textrm{ mod } 4\}$ (see e.g.~\cite[Proposition 5.1]{carantiprimitive} or~\cite{caldenote} and~\cite{leurentnew}).
\end{itemize}
It comes then with no surprise  that $\overline{\rho_{\mathrm{AES}}}^4$ admits invariant subspaces, like those found by
Leurent and Pernot~\cite{leurentnew}, using an
algorithm of Leander et al.~\cite{leander2015generic}. One example is $U < V^4$,
where
\[
U \deq \{(a,b,c,d,0,b,0,d,a,0,0,d,0,0,0,d) \mid a,b,c,d \in \F_2^8 \}.
\]

Although the results of this work are not straightforwardly generalized using the same methods to the case $i =3$, we find it easy to believe that
also $\Span{\overline{\rho_{\mathrm{AES}}}^{\,3}, T_{128}}$ act primitively on $V^4$. Moreover, there
is no reason to believe that the same result is not valid for the 192-bit and 256-bit versions of AES key-schedule. However, the increasing complexity 
of the strategy used here does not seem to be suitable for addressing the problem, which might require a different methodology. 

\section*{Acknowledgment}
The authors are gratefully thankful to the referees for their valuable and constructive corrections and suggestions that have
improved the quality of the manuscript.
\bibliographystyle{alpha}
\bibliography{sym2n_ref}

\end{document}